 
\documentclass[11pt,twoside]{amsart}
\usepackage{amsmath}
\usepackage{amsthm}
\usepackage{amsfonts}
\usepackage{amssymb}
\usepackage[all]{xy}
\usepackage{alltt}
\usepackage{enumitem}
\usepackage{stmaryrd}
\usepackage{comment}
\usepackage{xspace}

\theoremstyle{plain}
\newtheorem{prop}{Proposition}
\newtheorem{conjecture}[prop]{Conjecture}

\newtheorem{thm}[prop]{Theorem}
\theoremstyle{definition}

\newcommand{\Z}{\ensuremath{\mathbb{Z}}}

\newcommand{\Q}{\ensuremath{\mathbb{Q}}}

\DeclareMathOperator{\Hom}{Hom}
\DeclareMathOperator{\im}{im}
\DeclareMathOperator{\rank}{rank}
 
\newcommand{\n}{\noindent}

\parindent 1cm
\parskip .1cm

\usepackage[margin=3.15cm]{geometry}

\usepackage[
        colorlinks=true,urlcolor=blue,linkcolor=blue,citecolor=blue
]{hyperref}

\begin{document}  
 
\title{When is the multiplicative group of a field indecomposable?}
\date{\today}
 
\author{Sunil Chebolu}
\address{Department of Mathematics \\
Illinois State University \\
Normal, IL 61790, USA}
\email{schebol@ilstu.edu}

\author{Keir Lockridge} 
\address {Department of Mathematics \\
Gettysburg College \\
Gettysburg, PA 17325, USA}
\email{klockrid@gettysburg.edu}

\thanks{
The first author is supported by an NSA grant (H98230-13-1-0238) }

\keywords{Finite fields, indecomposable groups, Mersenne primes, Fermat primes, Catalan's Conjecture}
\subjclass[2010]{Primary 12E20; Secondary 11D41, 20K20}
 
\begin{abstract}
The multiplicative group of a finite field is well known to be cyclic; in this note, we determine the finite fields whose multiplicative groups are direct sum indecomposable.  We obtain our classification using a direct argument and also as a corollary to Catalan's Conjecture. Turning to infinite fields, we prove that any infinite field whose characteristic is not equal to 2 must have a decomposable multiplicative group. We conjecture that this is also true for infinite fields of characteristic 2 and we narrow the class of possible counter-examples. Finally, using the classification of finite commutative primary rings with cyclic multiplicative groups, we determine all finite commutative rings with indecomposable multiplicative groups.
\end{abstract}
 
\maketitle
\thispagestyle{empty}



\section{Introduction}
Fermat primes and Mersenne primes are two central classes of prime numbers which have enjoyed great esteem in number theory. Spreading from the blackboards of professional mathematicians to the notebooks of amateurs, these primes and the various problems surrounding them have been a source of great inspiration and fascination.  As we investigate the question posed in the title of this paper, both classes will make a surprising entry onto the stage. The related problem of determining which abelian groups can occur as the multiplicative group of a field was raised by L\'{a}szl\'{o} Fuchs more than 50 years ago in \cite{Fuchs}. Since then, much progress has been made, but the problem remains unsolved (see, for example, \cite{May, Dicker, Eugene1, Hua}). We refer the reader to \cite{CMN} for a survey of results in this area. Fuchs also asked whether the torsion subgroup of the multiplicative group of a field is necessarily a summand; this question was answered negatively by Cohn in \cite{Cohn}. In this paper, the question we ask is very much in the spirit of the aforementioned work: which fields have indecomposable multiplicative groups?  (Recall that a group is said to be indecomposable if it cannot be written as direct sum of two non-trivial subgroups.) In \S \ref{ff}, we classify the finite fields with indecomposable multiplicative groups. Our argument is simple and direct. However, the main result may also be obtained as a corollary to Catalan's Conjecture, described in \S \ref{catalan}, which was proved in 2002 by the Swiss mathematician Preda Mih\u{a}ilescu. In \S \ref{if}, we consider infinite fields, where we show that any infinite field whose characteristic is not equal to 2 must have a decomposable multiplicative group. We are unable to resolve the characteristic 2 case, though we have narrowed the class of possible counter-examples. Finally, in \S \ref{fr}, we use the classification of finite commutative primary rings with cyclic unit groups (found in \cite{PS}) to determine all finite commutative rings with indecomposable unit groups. Throughout, we will use standard elementary facts from number theory, algebra, and group theory which may be found in \cite{Burton}, \cite{DummitFoote}, and \cite{Robinson}, respectively.

The structure of the group of units in a ring has been studied extensively, especially for finite rings and group rings. Examples where the unit group is saddled with a similarly strong simplifying condition include \cite{PS}, referred to above, and our recent work \cite{24, 12, CLY}, where we examined the conditions under which every non-trivial unit in a ring has order $p$ a prime.

\section{Finite Fields\label{ff}} 
The goal of this section is to prove Theorem \ref{finitefields}, classifying the finite fields with indecomposable multiplicative groups. Our proof in this section uses elementary methods; in \S \ref{catalan}, we obtain this classification as a corollary to Catalan's Conjecture.

We begin by recording two basic facts about finite fields which can be found in any standard algebra textbook; see \cite{DummitFoote}, for instance. First, recall that every field has prime power order, and for every prime $p$ and positive integer $r$, there is a unique (up to isomorphism) finite field whose order is $p^r$. (The prime $p$ is the characteristic of the field.) Second, recall that the multiplicative group of a finite field $F$, written $F^\times$, is cyclic. (In fact, the multiplicative group of any field is locally cyclic (\cite[1.3.4]{Roman}); i.e., every finite subgroup is a cyclic group.) In our first proposition we make a simple observation which follows from the structure theorem for finite abelian groups. We refer to a positive integer as a prime power if it is equal to $p^r$ for some prime $p$ and integer $r \geq 1$.

\begin{prop}
If $F$ is a finite field of order $p^{r}$, then $F^{\times}$ is indecomposable if and only if $p^{r}-1$ is either 1 or a prime power. 
\end{prop}
Note that $F^\times$ has order 1 if and only if $F = \mathbf{F}_2$, the finite field with two elements.
\begin{proof}
As mentioned above, the group $F^{\times}$ is isomorphic to $C_{p^{r}-1}$, the multiplicative cyclic group of order $p^r - 1$.   From the structure theorem for finite abelian groups, a finite cyclic group is indecomposable if and only if it is trivial or has prime power order. 
\end{proof}

Because there is a unique finite field corresponding to each prime power $p^{r}$, determining which finite fields have an indecomposable multiplicative group is equivalent to solving the following number theoretic problem: find all pairs $(p, r)$, where $p$ is prime and $r$ is a positive integer, such that $p^{r}-1$ is a  prime power. We begin by determining the pairs $(p, r)$ for which $p^{r}-1$ is $1$ or a power of $2$. Recall that a Fermat prime is a prime of the form $2^{2^{n}}+1$.

\begin{prop} \label{2powers}
The quantity $p^{r} - 1$ is a power of $2$ if and only if $p$ is a Fermat prime and $r = 1$ or $p = 3$ and $r = 2$.
\end{prop}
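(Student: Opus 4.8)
The plan is to dispatch one direction by direct substitution and to extract strong divisibility constraints in the other. For the ``if'' direction, if $p=2^{2^n}+1$ is a Fermat prime and $r=1$ then $p^r-1=2^{2^n}$, while if $p=3$ and $r=2$ then $p^r-1=8=2^3$; both are powers of $2$, so this direction is immediate. For the converse, suppose $p^r-1=2^k$ with $k\geq 1$. The first easy observation is that $p$ must be odd, since $p^r-1$ is even (equivalently $p^r=2^k+1$ is odd). The heart of the argument is then a two-stage reduction on $r$.

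Stage one: I claim $r$ must be a power of $2$. Suppose instead that $r$ has an odd prime divisor $q$, and write $r=qs$. Factoring $p^r-1=(p^s)^q-1=(p^s-1)\bigl(1+p^s+\cdots+(p^s)^{q-1}\bigr)$, the second factor is a sum of $q$ odd terms (recall $p^s$ is odd and $q$ is odd), hence is itself odd, and it is clearly greater than $1$. But an odd integer greater than $1$ cannot divide $2^k$, contradicting $p^r-1=2^k$. Therefore $r=2^a$ for some $a\geq 0$.

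Stage two: if $a=0$ then $r=1$ and $p=2^k+1$ is prime. Here I would invoke the classical elementary fact that $2^k+1$ can be prime only when $k$ is itself a power of $2$ (if $k$ had an odd factor $m>1$, then $2^{k/m}+1$ would be a proper divisor of $2^k+1$), so $p=2^{2^n}+1$ is a Fermat prime. If instead $a\geq 1$, I would factor $p^{2^a}-1=(p^{2^{a-1}}-1)(p^{2^{a-1}}+1)$; since the product equals $2^k$, each factor is a power of $2$. These two factors are powers of $2$ differing by $2$, which forces them to be exactly $2$ and $4$. Hence $p^{2^{a-1}}=3$, giving $p=3$ and $2^{a-1}=1$, that is, $a=1$ and $r=2$. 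This exhausts the cases and completes the converse.

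I expect the main obstacle to be organizing the reduction cleanly, specifically recognizing that one must first force $r$ to be a power of $2$ before the clean ``two powers of $2$ differing by $2$'' endgame becomes available; the parity bookkeeping in the odd-prime-divisor step is the key technical point, and the auxiliary lemma on the primality of $2^k+1$ should be stated explicitly rather than taken for granted.
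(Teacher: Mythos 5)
Your proof is correct and follows essentially the same elementary route as the paper: factorization plus parity constraints on $r$, the classical fact that $2^k+1$ can be prime only when $k$ is a power of $2$, and the endgame that the only powers of $2$ differing by $2$ are $2$ and $4$. The only difference is organizational: the paper factors out $p-1$ first (obtaining the Fermat conclusion for every $r$ at once) and needs only that $r$ is even rather than a full power of $2$, whereas you pin down $r=2^a$ before splitting cases --- a harmless reordering.
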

\begin{proof}   Suppose $p^{r} - 1$ is a power of $2$. Then $p^{r}- 1= 2^{n}$ for some positive integer $n$ and $p$ must be odd. Now,
$$(p -1)(p^{r-1} + \cdots + p + 1) = 2^{n},$$ 
and this implies that $p-1$ is a power of $2$, so $p$ is a Fermat prime. (If $2^{m}+1$ is prime, then it is well known that $m$ must be a power of 2.)   If $r = 1$, then we are done, but if $r \geq 2$, then the second factor is also divisible by 2, so $r$ must be even.  Thus, writing $r = 2v$, we have $$(p^v -1)(p^v + 1) = 2^{n}.$$  However, the only pair of positive integers that differ by two and are powers of two are 2 and 4, so it must be that $p^v = 3$. It follows that $(p, r) = (3, 2)$. 
\end{proof}

The next proposition gives the pairs $(p, r)$ for which $p^{r}-1$ is a  power of  an odd prime. Recall that a Mersenne prime is a prime of the form $2^r - 1$.

\begin{prop} \label{ppowers}
Suppose $p^r - 1 = q^n$ for some odd prime $q$ and positive integer $n$. Then, $n = 1$, $p = 2$, and $q = 2^r - 1$ is a Mersenne prime.
\end{prop}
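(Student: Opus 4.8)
The plan is to reduce the equation to a purely $2$-adic statement and then exploit an elementary factorization. Since $q$ is an odd prime, $q^n$ is odd, so $p^r = q^n + 1$ is even and hence $p = 2$; this establishes the claimed value of $p$ at once. Because $q^n \geq q \geq 3$, we have $2^r - 1 \geq 3$ and therefore $r \geq 2$, a fact I will use freely below. All the work now lies in showing that the exponent $n$ must equal $1$: once $n = 1$ we have $q = 2^r - 1$, which is prime by hypothesis, i.e.\ a Mersenne prime, and the proposition follows.

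The next step is to pin down the parity of $n$. Reducing $q^n = 2^r - 1$ modulo $4$ and using $r \geq 2$ gives $q^n \equiv 3 \pmod 4$. Inspecting the two residues $q \equiv 1$ and $q \equiv 3 \pmod 4$ rules out the first (it would force $q^n \equiv 1 \pmod 4$), so $q \equiv 3 \equiv -1 \pmod 4$, and then $q^n \equiv (-1)^n \pmod 4$ forces $n$ to be odd. This short computation simultaneously disposes of all even exponents.

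With $n$ odd, I would invoke the standard factorization $q^n + 1 = (q+1)\bigl(q^{n-1} - q^{n-2} + \cdots - q + 1\bigr)$, so that $2^r = (q+1)S$, where $S$ denotes the alternating cofactor. Since the product is a power of $2$, each factor is a power of $2$. The crux is the observation that $S$ is odd: it is an alternating sum of $n$ odd terms with $n$ odd, hence $S \equiv 1 \pmod 2$. The only odd power of $2$ is $1$, so $S = 1$. On the other hand, for $n \geq 3$ one checks directly (for instance by grouping consecutive terms) that $S \geq q^{n-2}(q-1) > 1$, a contradiction; thus $n = 1$. The heart of the argument—and the step I expect to be the main obstacle—is exactly this treatment of $n \geq 2$: recognizing that the cofactor $S$ is forced to be both a power of $2$ and odd, and therefore trivial, which collapses the equation. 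The surrounding bookkeeping is routine, the only point of care being the boundary case $n = 1$, where $S = 1$ legitimately and no contradiction arises.
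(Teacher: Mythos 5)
Your proof is correct and follows essentially the same route as the paper: parity forces $p = 2$, reduction modulo $4$ forces $n$ odd, and the factorization $q^n + 1 = (q+1)\bigl(q^{n-1} - q^{n-2} + \cdots - q + 1\bigr)$ yields a cofactor that is both a power of $2$ and odd. The only cosmetic difference is the final contradiction: you conclude $S = 1$ and refute this with the size estimate $S \geq q^{n-2}(q-1)$, whereas the paper notes $S > 1$ forces $S$ to be even, contradicting $S \equiv n \equiv 1 \pmod 2$.
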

\begin{proof}
First suppose $n = 1$ and $q = p^r -1$ is an odd prime. Parity considerations imply that $p = 2$, so $q = 2^r - 1$ is a Mersenne prime. 

We claim there are no other solutions to $p^r - 1 = q^n$. Indeed, assume to the contrary that $p^r - 1 = q^n$ with $n \geq 2$ and $q$ an odd prime. As above, we have $p = 2$ and $q^n + 1 = 2^r$. Since $q > 1$, we have $r \geq 2$, and hence $q^n \equiv -1 \, (4)$. This means $q \equiv -1\, (4)$ and $n$ is odd. We now have a factorization $$2^r = q^n + 1 = (q+1)(q^{n-1} - q^{n-2} + \cdots - q + 1).$$ Because $ n \geq 2$, the second factor is even. However, it is also congruent to the odd integer $n$ modulo 2, a contradiction. This proves that there are no solutions when $n \geq 2$, and the proof is complete.
\end{proof}

The following theorem now follows from the previous three propositions.

\begin{thm} \label{finitefields} Let $F$ be a finite field. The multiplicative group of $F$ is indecomposable if and only if $F$ is one of the following fields:
\begin{enumerate}
\item $\mathbf{F}_2$,
\item  $\mathbf{F}_9$,
\item $\mathbf{F}_p$ where $p$ is a Fermat prime, or
\item $\mathbf{F}_{q+1}$ where $q$ is a Mersenne prime.
\end{enumerate}
\end{thm}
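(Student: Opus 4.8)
The plan is simply to assemble the three preceding propositions; this theorem is the bookkeeping step that repackages their conclusions. By the unnamed first proposition of this section, $F^{\times}$ is indecomposable precisely when $p^{r} - 1$ equals $1$ or is a prime power, so it suffices to enumerate the pairs $(p,r)$ meeting this condition and translate each back into a field. I would partition the condition into three exhaustive cases according to the value of $p^{r} - 1$: it equals $1$, it is a power of $2$, or it is a power of an odd prime. The point to watch for exhaustiveness is that every prime power is by definition either a power of $2$ or a power of an odd prime, so these three cases cover the indecomposability criterion with no gaps.

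First I would dispose of the trivial case: $p^{r} - 1 = 1$ forces $p^{r} = 2$, hence $F = \mathbf{F}_2$. This is consistent with the note following the first proposition, that $F^{\times}$ has order $1$ exactly when $F = \mathbf{F}_2$. Next, if $p^{r} - 1$ is a power of $2$, Proposition \ref{2powers} tells me that either $p$ is a Fermat prime and $r = 1$, yielding $F = \mathbf{F}_p$ with $p$ a Fermat prime, or $(p,r) = (3,2)$, yielding $F = \mathbf{F}_9$. Finally, if $p^{r} - 1 = q^{n}$ is a power of an odd prime $q$, Proposition \ref{ppowers} forces $n = 1$, $p = 2$, and $q = 2^{r} - 1$ a Mersenne prime; since $p^{r} = q + 1$, this gives $F = \mathbf{F}_{q+1}$. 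These four outcomes are exactly the fields listed.

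For the converse direction I would check that each listed field indeed satisfies the indecomposability criterion, which is immediate from the defining forms: a Fermat prime $p$ gives $p - 1$ equal to a power of $2$; the field $\mathbf{F}_9$ gives $9 - 1 = 8 = 2^{3}$; a Mersenne prime $q = 2^{r} - 1$ is itself prime, so for $F = \mathbf{F}_{2^{r}}$ we have $p^{r} - 1 = q$; and $\mathbf{F}_2$ gives $2 - 1 = 1$. I do not expect any genuine obstacle here, since the substantive number theory is already carried out in Propositions \ref{2powers} and \ref{ppowers}. The only subtlety worth a remark is that the listed families are not disjoint as sets of arithmetic conditions but are harmless as a classification of fields: small fields may satisfy more than one description (for instance $\mathbf{F}_3$ arises from the Fermat prime $3$ and $\mathbf{F}_4$ from the Mersenne prime $3$), and one should simply note that every indecomposable case falls into at least one of the four families.
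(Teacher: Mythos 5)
Your proof is correct and takes essentially the same approach as the paper, whose entire proof is the remark that the theorem ``follows from the previous three propositions''; your three-way case split ($p^r-1$ equal to $1$, a power of $2$, or a power of an odd prime) is exactly the assembly the paper intends, and your converse check and remark about overlapping families are harmless elaborations.
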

\n We will see in the next section that the odd-ball case $\mathbf{F}_9$ corresponds to the unique solution in Catalan's Conjecture (see Theorem \ref{m-theorem}).

It is natural to ask whether there are infinitely many finite fields with an indecomposable multiplicative group.  In light of the above theorem, this question has an affirmative answer if and only if either the collection of Fermat primes or the collection of Mersenne primes is infinite. It is not known whether either collection is finite or infinite.  As of June 2014, only 5 Fermat primes and 48 Mersenne primes are known. It is believed that there are only finitely many Fermat primes and infinitely many Mersenne primes.

\section{Catalan's Conjecture\label{catalan}}
In 1844, the French mathematician Eug\`{e}ne Catalan conjectured that $8$ and $9$ are the only consecutive perfect powers among the positive integers. More precisely, he conjectured the following statement which was proved in 2002 by the Swiss mathematician Preda Mih\u{a}ilescu.

\begin{thm}[Mih\u{a}ilescu]\label{m-theorem}
The  Diophantine equation
\[ x^{u} - y^{v} = 1  \ \ \ \ (x \ge 1, y \ge 1, u \ge 2, v \ge 2)\]
has a unique solution which is given by $x^{u} = 3^{2}$ and $y^{v} = 2^{3}$.
\end{thm}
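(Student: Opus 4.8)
The plan is to follow the route by which Catalan's conjecture was eventually settled, reducing the problem through a chain of special cases to a deep cyclotomic argument. A few preliminary remarks clear away degenerate cases: if $x = 1$ then $y^v = 0$, and if $y = 1$ then $x^u = 2$, neither of which is possible for integers in the stated range, so any solution has $x, y \geq 2$. Moreover it suffices to treat prime exponents, for if $u = pm$ with $p$ prime then $x^u = (x^m)^p$, and replacing $x$ by $x^m$ reduces $u$ to a prime; the same applies to $v$. I may therefore assume $u = p$ and $v = q$ are primes and study $x^p - y^q = 1$.

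Next I would eliminate the cases in which an exponent equals $2$. The case $p = q = 2$ is impossible, since two perfect squares exceeding $1$ cannot differ by $1$. If $q = 2$ and $p$ is an odd prime, the equation reads $x^p = y^2 + 1$, and a factorization of $y^2 + 1 = (y + i)(y - i)$ in the Gaussian integers $\Z[i]$ (Lebesgue's argument) shows there is no solution with $y \geq 2$. If instead $p = 2$ and $q$ is an odd prime, then $(x-1)(x+1) = y^q$; controlling the common factor of $x-1$ and $x+1$ (going back to Ko Chao) leaves only $q = 3$, for which the resulting equation $x^2 = y^3 + 1$ has the single positive solution $3^2 - 2^3 = 1$ recorded in the statement.

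It then remains to show that $x^p - y^q = 1$ has no solution when $p$ and $q$ are both odd primes. Here I would first invoke Cassels' divisibility relations, which assert that any such solution satisfies $p \mid y$ and $q \mid x$; these render the natural factorizations of $x^p - 1$ and $y^q + 1$ into nearly coprime pieces amenable to analysis. The decisive step is the cyclotomic one: passing to the fields $\Q(\zeta_p)$ and $\Q(\zeta_q)$, one examines the ideal generated by $x - \zeta_p$ together with its Galois conjugates, applies Stickelberger's theorem to manufacture annihilators of the ideal class group, and combines this with Thaine's theorem on the real (plus) part of the class group to extract algebraic relations that no genuine solution can satisfy.

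The entire weight of the argument rests on this last step. The reduction to prime exponents and the disposal of the exponent-$2$ cases are classical and essentially elementary; but the passage from Cassels' relations to an outright contradiction requires the full arithmetic of cyclotomic fields and their class groups, and it is exactly this ingredient---Mih\u{a}ilescu's control of the relevant Galois-module structure---that withstood more than a century and a half of effort. Accordingly I would not reproduce this proof here, but would instead cite Mih\u{a}ilescu's work and invoke the theorem only as a tool in what follows.
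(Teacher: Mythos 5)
Your proposal lands in the same place as the paper: the paper offers no proof of this theorem at all, treating it as an external black box---it simply cites Mih\u{a}ilescu's 2002 result and points the reader to Mets\"{a}nkyl\"{a}'s exposition---and your proposal likewise concludes by declining to reproduce the argument and citing Mih\u{a}ilescu instead. Your historical sketch of the reduction (prime exponents, Lebesgue's case $q=2$, the $p=2$ case yielding $3^2-2^3=1$, then Cassels' relations and the Stickelberger--Thaine cyclotomic argument for odd prime exponents) is accurate as an outline, but the operative content of both your treatment and the paper's is the citation itself.
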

 
\noindent We refer the reader to \cite{Metsankyla} for the interesting history behind this theorem and an exposition of Mih\u{a}ilescu's proof.

We now make explicit the connection between Catalan's Conjecture and our eponymous problem. In \S \ref{ff}, we reduced the determination of the finite fields with (non-trivial) indecomposable multiplicative group to finding pairs $(p, r)$ (where $p$ is prime and $r$ is a positive integer) such that $p^{r}-1= q^{n}$ for some prime $q$ and positive integer $n$.  The last equation rearranges to $p^r - q^n = 1$. The connection to Catalan's Conjecture is now clear: we seek solutions to the Diophantine equation $x^{u} - y^{v} = 1$, where $x$ and $y$ are prime numbers and the exponents are natural numbers. Propositions \ref{2powers} and \ref{ppowers} may be together viewed as a special case of Catalan's Conjecture; they give a complete list of the consecutive prime powers.

We will now prove Theorem \ref{finitefields} using Catalan's Conjecture. Let $F$ be a finite field of order $p^{r}$ whose multiplicative group is indecomposable. Then, as explained above, we obtain $p^{r} - q^{n} = 1$, where $q$ is prime (here, we allow $n \geq 0$). We will consider 3 cases which neatly organize the fields obtained in Theorem \ref{finitefields}.

First, suppose $r = 1$ and $n \geq 0$.  This gives $p - q^{n} = 1$.  If $p = 2$, then $n = 0$ and $F = \mathbf{F}_2$.  If $p$ is odd, then $q = 2$ and $p$ is a Fermat prime. The corresponding finite fields are $\mathbf{F}_p$ where $p$ is a Fermat prime.

Next, suppose $r \geq 2$ and $0 \leq n \leq 1$. Here, $r \geq 2$ forces $n = 1$, so $p^{r} - q   = 1$.  If $q = 2$, then $p^{r} = 3$, which is not possible. If $q > 2$, then $p = 2$, and  $2^{r} - 1$ is a Mersenne prime. The corresponding finite fields are $\mathbf{F}_{q+1}$ where $q$ is a Mersenne prime. 

Finally, assume $r \ge 2$ and $n \ge 2$.  In this case, Catalan's Conjecture implies that $p^{2}= 3^{2}$. The corresponding finite field is $\mathbf{F}_{9}$.

\section{Infinite Fields\label{if}}

Our goal in this section is to determine all infinite fields with an indecomposable multiplicative group. We are currently unable to find a single example of such a field; we can, however, narrow the possible examples to a special class of fields of characteristic 2 (see Theorem \ref{reduction}).

To begin, let $F$ be an infinite field whose multiplicative group is indecomposable.  We will first argue that $F^\times$ must be torsion free. The following theorem relies upon a classical result of Pr\"{u}fer and Baer on the structure of abelian groups whose elements have boundedly finite orders (see \cite[\S 4]{Robinson} for details). Recall that a $p$-group is a group in which every non-trivial element has (finite) order a power of $p$.

\begin{thm}[{\cite[4.3.12]{Robinson}}]\label{corqc}
Let $G$ be an indecomposable abelian group that is not torsion-free. Then $G$ is either a cyclic or quasicyclic $p$-group for some prime $p$.
\end{thm}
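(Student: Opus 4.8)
The plan is to combine the primary decomposition of the torsion subgroup with two structural facts from the theory of abelian groups: that a divisible subgroup is always a direct summand, and that a pure subgroup of bounded exponent is a direct summand (the latter resting on the Pr\"{u}fer--Baer theorem quoted above). Write $T = T(G)$ for the torsion subgroup, which is nonzero by hypothesis and is pure in $G$, since any $g \in G$ with $ng \in T$ is itself torsion and hence lies in $T$. I would first split $T$ as $T = D \oplus R$, where $D$ is its maximal divisible subgroup and $R$ is reduced.

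Because $D$ is divisible, it is a direct summand of $G$; indecomposability then forces either $D = 0$ or $G = D$. In the latter case $G$ is divisible and torsion, hence a direct sum of quasicyclic groups $\Z(p^\infty)$, and indecomposability leaves exactly one summand, so $G \cong \Z(p^\infty)$. This disposes of the quasicyclic conclusion, and I note that it automatically pins down a single prime, since $G$ is then itself a $p$-group.

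If instead $D = 0$, then $T$ is a nonzero reduced torsion group. Choosing a prime $p$ with nonzero $p$-primary component $T_p$, I would locate an element of order $p$ having finite $p$-height and use it to build a cyclic subgroup isomorphic to $\Z/p^{k}\Z$ that is pure in $G$. A pure subgroup of bounded exponent is a direct summand (this is where the Pr\"{u}fer--Baer machinery does the work), so by indecomposability $G$ coincides with this subgroup and is a finite cyclic $p$-group. As in the divisible case, this shows $G$ is a $p$-group for a single prime, so no second primary component can survive, and both conclusions of the theorem are accounted for.

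The main obstacle is the reduced case: producing an honest finite cyclic direct summand of the ambient group rather than merely a cyclic subgroup. Divisibility yields summands for free, but in the reduced setting one must argue, via heights and the bounded-exponent structure theory of Pr\"{u}fer and Baer, that a reduced torsion group genuinely carries a cyclic $p$-group as a direct summand of $G$. Verifying that the chosen cyclic subgroup is pure, so that the summand theorem applies, is the delicate point on which the whole argument turns.
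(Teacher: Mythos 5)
There is no in-paper proof to compare against here: the paper quotes this theorem verbatim from Robinson [4.3.12] and offers only the remark that it rests on the Pr\"ufer--Baer theory of bounded abelian groups. What you have written is, in outline, the standard textbook proof of that cited result, and the skeleton is sound. In particular the divisible half is complete as it stands: a nonzero maximal divisible subgroup of $T$ is injective as a $\Z$-module, hence a summand of $G$, so indecomposability forces $G$ to be divisible and torsion, hence a direct sum of quasicyclic groups, hence a single $C_{p^\infty}$.

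In the reduced case, however, you have named the crucial step rather than performed it, and it actually consists of \emph{two} unproven lemmas. (i) \emph{Existence of a socle element of finite height.} This is not automatic, since reduced $p$-groups can contain elements of infinite height; you must show that if every element of order $p$ in $T_p$ had infinite height, then $T_p$ would be divisible. This goes by induction on the order of $a \in T_p$: if $a$ has order $p^k$, then $p^{k-1}a$ has order $p$, so $p^{k-1}a = p^k c$ for some $c$; then $a - pc$ has order at most $p^{k-1}$, hence lies in $pT_p$ by induction, so $a \in pT_p$; thus $pT_p = T_p$, making $T_p$ divisible and contradicting reducedness. (ii) \emph{Purity of the chosen cyclic subgroup.} Let $a$ have order $p$ and finite height exactly $k$, and write $a = p^k b$; then $\langle b \rangle \cong \Z/p^{k+1}\Z$ is pure because $p^j b$ has height exactly $j$ for $0 \le j \le k$: its height is at least $j$ trivially, while $p^j b = p^m c$ with $m > j$ would give $a = p^{k-j+m}c$, contradicting that the height of $a$ is exactly $k$. (For a $p$-subgroup, purity with respect to powers of $p$ suffices, since multiplication by integers prime to $p$ maps $\langle b \rangle$ onto itself.) One further point you should make explicit: the heights in (i) and (ii) must ultimately be computed in $G$, not just in $T_p$. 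This costs nothing, because if $x \in T_p$ and $p^n g = x$ for some $g \in G$, then $g$ is itself a $p$-torsion element, so $g \in T_p$; hence $p$-heights of elements of $T_p$ agree in $T_p$ and in $G$, and your pure subgroup is pure in $G$ itself. With (i) and (ii) supplied, Kulikov's theorem (a bounded pure subgroup is a summand --- the Pr\"ufer--Baer input you invoke) finishes the argument exactly as you describe.
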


A $p$-group is quasicyclic if it is isomorphic to $C_{p^\infty}$, the union of all cyclic groups of order a power of $p$:
\[ C_{p^{\infty}}  = \bigcup_{n\ge 0} C_{p^n}.\]
This group, also called the Pr\"{u}fer group, is written additively as the direct limit $\underset{\longrightarrow}{\lim}\, \mathbb{Z}/(p^n)$. It is the injective hull of $\mathbb{Z}/(p)$ and is isomorphic to $\mathbb{Z}\left[1/p\right]/\mathbb{Z}$.

Since our multiplicative group $F^\times$ is both infinite and indecomposable, Theorem \ref{corqc} implies $F^\times \cong C_{p^{\infty}}$.  We now prove that this is impossible.

\begin{prop}
There is no field $F$ whose multiplicative group is a quasicyclic $p$-group.
\end{prop}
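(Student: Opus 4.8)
The plan is to assume $F^\times \cong C_{p^\infty}$ and derive a contradiction by first pinning down the characteristic of $F$ and then showing that $F$ would be forced to contain finite subfields $\mathbf{F}_{\ell^m}$ of arbitrarily large degree whose cyclic multiplicative groups violate the arithmetic already established in Propositions \ref{2powers} and \ref{ppowers}. The whole argument thus reduces the field-theoretic hypothesis to a purely number-theoretic impossibility.

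First I would exploit the fact that $C_{p^\infty}$ is a torsion group, so every element of $F^\times$ is a root of unity and $F$ is algebraic over its prime field. This rules out characteristic $0$ at once: the prime field would be $\Q$, and the element $2 \in \Q^\times \subseteq F^\times$ has infinite order, contradicting torsion. Hence the characteristic is some prime $\ell$ and $F$ is algebraic over $\mathbf{F}_\ell$, i.e. $F \subseteq \overline{\mathbf{F}_\ell}$. I would then rule out $\ell = p$ as well: in characteristic $p$ the identity $(x-1)^{p^k} = x^{p^k} - 1$ forces the only $p$-power root of unity to be $1$, so a $p$-group $F^\times$ would be trivial, contradicting that $C_{p^\infty}$ is infinite. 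Thus $\ell \neq p$.

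The heart of the proof is the following reduction. Since $F^\times \cong C_{p^\infty}$ is infinite, $F$ is an infinite subfield of $\overline{\mathbf{F}_\ell} = \bigcup_m \mathbf{F}_{\ell^m}$, and so $F$ must contain $\mathbf{F}_{\ell^m}$ for arbitrarily large $m$. For each such $m$, the group $\mathbf{F}_{\ell^m}^\times$ is a \emph{finite} subgroup of $C_{p^\infty}$; but the only finite subgroups of $C_{p^\infty}$ are the cyclic groups $C_{p^k}$, whose orders are powers of $p$. Consequently $\ell^m - 1 = |\mathbf{F}_{\ell^m}^\times|$ would have to be a power of $p$ for arbitrarily large $m$. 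This is precisely what the earlier propositions forbid: if $p = 2$, Proposition \ref{2powers} permits $\ell^m - 1$ to be a power of $2$ only for $m = 1$ (with $\ell$ a Fermat prime) or $(\ell, m) = (3,2)$; if $p$ is odd, Proposition \ref{ppowers} forces $\ell = 2$ with $p = 2^m - 1$, which determines $m$ uniquely. In every case $m$ is bounded, contradicting the existence of subfields $\mathbf{F}_{\ell^m} \subseteq F$ of unbounded degree. The main obstacle, and the step I would take the most care over, is this conversion: recognizing that a finite subgroup of $C_{p^\infty}$ must have $p$-power order is exactly what turns the hypothesis on $F^\times$ into the statement ``$\ell^m - 1$ is a prime power for infinitely many $m$,'' after which Propositions \ref{2powers} and \ref{ppowers} finish the job.
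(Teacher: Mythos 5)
Your proof is correct and takes essentially the same approach as the paper's own (first) argument: both reduce the hypothesis to the observation that the multiplicative groups of the finite subfields of $F$ are finite subgroups of $C_{p^\infty}$, yielding equations $\ell^m - 1 = p^k$ for unboundedly many $m$, which Propositions \ref{2powers} and \ref{ppowers} rule out. The paper also sketches two alternative endgames (via Catalan's Conjecture and Bang's theorem), but its core elementary argument is the one you give.
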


\begin{proof}
Assume to the contrary that there is a field $F$ whose multiplicative group is isomorphic to $C_{p^{\infty}}$. Then every element in $F^\times$ is a torsion element. This implies that the characteristic of $F$ cannot be $0$. 
Let $q > 0$ be the characteristic of $F$.   Note that $F^\times$ contains a copy of $C_{p^i}$ for all positive integers $i$. In particular, $F$ contains $\zeta_{p^i}$, a primitive $p^i$th root of unity, for all $i \ge 1$. Consider the ascending tower of finite fields $\mathbf{F}_q( \zeta_{p^i})$ for $i \ge 1$ inside $F$. Let $q^{n_i}$ denote the orders of these finite fields. The multiplicative groups of these finite fields are finite cyclic subgroups of $C_{p^{\infty}}$. Consequently, there are integers $m_i$ such that
\label{keyequation}
\begin{equation} q^{n_i} - 1 = p^{m_i}\ \ \text{for}\ \   { i \ge 1}.\label{nozpi}\end{equation}
We now offer 3 different arguments to show that this is impossible. 

1. In Equation (\ref{nozpi}), note that $\{m_i\}$ and $\{n_i\}$ are both increasing sequences.  When $p=2$, this is impossible by Proposition \ref{2powers} and when $p$ is odd, this is impossible by Proposition \ref{ppowers}.

2. Note that Equation (\ref{nozpi}) can also be rewritten as
\[ q^{n_i} - p^{m_i}  = 1\ \ \text{for}\ \   { i \ge 1}.\]
This shows that there are infinitely many solutions in positive integers to the equation $x^{u} - y^{v} = 1$. This contradicts Catalan's Conjecture.

3. Our final argument relies on the following special case of Zsigmondy's Theorem, proved by A. S. Bang in 1886 (see \cite{Ribenboim} for the statement of Zsigmondy's Theorem and an account of its interesting history).
\begin{thm}[Bang] Let $a$ and $t$ be integers greater than $2$. There
exists a prime divisor $l$ of  $a^{t} - 1$ such that $l$ does not divide $a^{j} -1$ for all $
0 < j < t$.
\end{thm}
\n It is easy to see  that Equation (\ref{nozpi}) contradicts this theorem.\end{proof}

Now, coupling the above work with the observation that any field of characteristic not equal to 2 has non-trivial torsion element $-1$, we have the following theorem.

\begin{thm}\label{tfthm}
If $F$ is an infinite field such that $F^\times$ is not torsion-free, then $F^\times$ is a decomposable group. In particular, infinite fields of characteristic not equal to $2$ have decomposable multiplicative groups.  
\end{thm}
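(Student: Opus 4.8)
The plan is to argue by contradiction, assembling the two results already in hand: Theorem \ref{corqc} and the Proposition immediately preceding this statement. Suppose $F$ is an infinite field for which $F^\times$ is not torsion-free, yet $F^\times$ is \emph{indecomposable}. Since $F$ is infinite, $F^\times$ is an infinite group, and by hypothesis it is indecomposable and not torsion-free; Theorem \ref{corqc} therefore forces $F^\times$ to be either a cyclic or a quasicyclic $p$-group for some prime $p$.

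Next I would eliminate the cyclic case on cardinality grounds. A cyclic $p$-group is, by the definition of $p$-group recorded above, some $C_{p^n}$ with $n$ finite, hence a finite group; this contradicts the fact that $F^\times$ is infinite. (Note that the only infinite cyclic group, $\Z$, is torsion-free and so is already excluded by hypothesis.) This leaves only the quasicyclic possibility, $F^\times \cong C_{p^\infty}$. But that directly contradicts the preceding Proposition, which asserts that no field has a quasicyclic multiplicative group. The contradiction shows $F^\times$ must be decomposable, establishing the first assertion.

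For the ``in particular'' clause, I would observe that if $\operatorname{char} F \neq 2$, then $-1 \neq 1$ in $F$, so $-1$ is a non-trivial element of $F^\times$ with $(-1)^2 = 1$; thus $-1$ has order $2$ and $F^\times$ is not torsion-free. Applying the first part to any infinite field of characteristic not equal to $2$ then immediately yields that $F^\times$ is decomposable.

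Since the substantive work—ruling out the quasicyclic group—has already been carried out in the Proposition, I do not anticipate any real obstacle here; the theorem is a short deduction from the machinery in place. The only point requiring a moment's care is the bookkeeping in the structure theorem, namely reading ``cyclic $p$-group'' as a finite $C_{p^n}$ so that infiniteness genuinely excludes it, and confirming that the torsion-free group $\Z$ plays no role under the standing hypothesis.
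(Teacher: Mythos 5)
Your proposal is correct and follows exactly the paper's own route: invoke Theorem \ref{corqc} to reduce an infinite, indecomposable, non-torsion-free $F^\times$ to the quasicyclic case (the cyclic case being excluded by infiniteness), then contradict the preceding Proposition, with the element $-1$ of order $2$ handling the characteristic-not-$2$ clause. No gaps, and no meaningful difference from the paper's argument.
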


It is our suspicion that the characteristic 2 case is no different; we therefore make the following conjecture.
\begin{conjecture} Every infinite field has a decomposable multiplicative group. \end{conjecture}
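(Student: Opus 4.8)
The plan is to combine the results already in hand with a reduction to a single recalcitrant case. By Theorem \ref{tfthm} it suffices to treat an infinite field $F$ of characteristic $2$ whose multiplicative group $F^\times$ is torsion-free; the conjecture asserts that even here $F^\times$ must decompose. The first step is to pin down the coarse shape of such an $F$. Any element of $F$ algebraic over $\mathbf{F}_2$ lies in a finite field and is therefore a root of unity, so torsion-freeness forces $F\cap\overline{\mathbf{F}_2}=\mathbf{F}_2$; in particular $F$ contains a transcendental $t$ and hence the rational function field $\mathbf{F}_2(t)$. Since $\mathbf{F}_2[t]$ is a unique factorization domain with trivial unit group, $\mathbf{F}_2(t)^\times$ is free abelian on the monic irreducible polynomials, and two of them---say $t$ and $1+t$---remain multiplicatively independent in $F^\times$ because $\mathbf{F}_2(t)\hookrightarrow F$ is injective. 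Thus $F^\times$ always has torsion-free rank at least $2$, which already rules out the easiest indecomposable torsion-free groups, the rank-one subgroups of $\Q$.

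The main engine I would use is the observation that a nonzero homomorphism $\varphi\colon F^\times\to\Z$ forces a decomposition. Indeed, $\im\varphi$ is a nonzero subgroup of $\Z$, hence infinite cyclic, and since free abelian groups are projective the resulting surjection splits: $F^\times\cong\Z\oplus\ker\varphi$, with $\ker\varphi$ nontrivial because $\rank F^\times\ge 2$. Such a $\varphi$ is exactly a nontrivial $\Z$-valued (discrete) valuation on $F$. Every finitely generated extension of $\mathbf{F}_2$ of positive transcendence degree is a function field and carries such valuations---its places---so for all these fields the conjecture follows immediately. More generally, the decomposition goes through whenever $\Hom(F^\times,\Z)\neq 0$.

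The obstacle is concentrated precisely where this homomorphism is unavailable, namely when $F^\times$ is divisible. Because $\Z$ admits no nonzero divisible elements, $2$-divisibility of $F^\times$ already annihilates $\Hom(F^\times,\Z)$, and in characteristic $2$ the map $x\mapsto x^2$ is surjective exactly when $F$ is perfect; so $\Hom(F^\times,\Z)=0$ for every perfect field. This is the hard part, and I expect it to be the genuine sticking point: for a perfect field the free-quotient trick collapses entirely, and one is forced into valuations with divisible value groups such as $\Z[1/2]$ or $\Q$, for which the ambient short exact sequence need no longer split since these groups are not projective. For tractable perfect fields---say the perfect closure of $\mathbf{F}_2(t)$---one can still decompose $F^\times$ by computing it through divisor theory (it becomes a direct sum of many copies of $\Z[1/2]$), but I do not see how to make such a computation uniform across all perfect fields of characteristic $2$ with torsion-free units, possibly of enormous transcendence degree. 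This divisible case is, I suspect, exactly the class of potential counterexamples isolated in Theorem \ref{reduction}, and resolving it is what would settle the conjecture.
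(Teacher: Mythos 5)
What you have written is not a proof of the conjecture, and you say so yourself in your last paragraph; the concrete gap is that your argument yields decomposability only when $\Hom(F^\times,\Z)\neq 0$, and you have no argument at all in the complementary case. Be aware that this gap is not one you could have closed with more care: the statement is an open \emph{conjecture} in the paper, and the authors prove nothing beyond Theorem~\ref{tfthm} plus a narrowing of counter-examples (Theorem~\ref{reduction}). Two points in your reduction are inaccurate, however. First, a homomorphism $F^\times\to\Z$ is not the same thing as a discrete valuation on $F$ (no ultrametric inequality is imposed on a group homomorphism); the direction you actually use---a place of a function field induces a nonzero element of $\Hom(F^\times,\Z)$---is fine, but the identification is false as stated. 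Second, your framing of the hard case as ``$F^\times$ divisible'' is backwards. If $F^\times$ were divisible and torsion-free it would be a $\Q$-vector space, which by your own rank-$\geq 2$ observation is decomposable; divisibility is the \emph{easy} case, and indeed part~(\ref{fxprops}) of Theorem~\ref{reduction} shows a counter-example's multiplicative group must be reduced. The genuinely unresolved class is fields with $\Hom(F^\times,\Z)=0$ but $F^\times$ not divisible (your own example $\bigoplus\Z[1/2]$, which is $2$-divisible but not divisible, has exactly this character). Moreover, you prove only that perfect fields lie in this class ($F$ perfect $\Rightarrow$ $\Hom(F^\times,\Z)=0$), not the converse, so the class of fields your method cannot touch is larger than the perfect ones and is left uncharacterized.

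Comparing with the paper: your positive steps run parallel to the paper's own reduction, in places verbatim. The splitting trick---a nonzero map onto an infinite cyclic group splits because $\Z$ is projective, giving a $\Z$ summand---is exactly how the paper proves $\Hom(F^\times,\Z)=0$ for a counter-example in Theorem~\ref{reduction}(\ref{fxprops}); your observation that torsion-freeness forces $F\cap\overline{\mathbf{F}_2}=\mathbf{F}_2$ is part~(\ref{ct}); your computation of $\mathbf{F}_2(t)^\times$ as a free abelian group via unique factorization is part~(\ref{Lx}). The paper pushes strictly further using the norm map: Proposition~\ref{free-summand} shows that if $k^\times$ is free abelian and $[K:k]<\infty$, then $k^\times$ is a summand of $K^\times$; consequently a counter-example $F$ must have infinite degree over any purely transcendental subfield $L$ (part~(\ref{degree})), and $F^\times$ must be an essential infinite union of subgroups each having $L^\times$ as a summand (part~(\ref{essential})). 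Neither your reduction nor the paper's resolves the case $\Hom(F^\times,\Z)=0$; your proposal should therefore be read as a somewhat weaker variant of Theorem~\ref{reduction}, not as a proof of the conjecture.
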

\n Theorem \ref{reduction} summarizes everything we know about possible counter-examples to this conjecture. We will use the next proposition in that theorem. Recall that the rank of an abelian group $A$, written $\rank A$, is the size of a maximal linearly independent subset; equivalently, $\rank A$ is the dimension of $A\otimes \Q$ as a $\Q$-vector space. Further, since $\Q$ is flat over $\Z$, we have $\rank B \leq \rank A$ whenever $B$ is a subgroup of $A$.

\begin{prop}\label{free-summand}
Let $k$ be a field such that $k^\times$ is a free abelian group and let $K$ be a finite extension of $k$. Then, $K^\times$ has a summand isomorphic to $k^\times$.
\end{prop}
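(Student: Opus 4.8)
The plan is to exploit the field norm together with the hypothesis that $k^\times$ is free (and hence projective) as a $\Z$-module. Write $n = [K:k]$ and let $N = N_{K/k}\colon K^\times \to k^\times$ be the norm, which is a group homomorphism (defined for an arbitrary finite extension as the determinant of the $k$-linear multiplication map, so no separability hypothesis is needed---this matters, since the open case of the conjecture is characteristic $2$). The one fact I would single out is that for $a \in k^\times$ one has $N(a) = a^{n}$; thus the composite $k^\times \hookrightarrow K^\times \xrightarrow{N} k^\times$ is multiplication by $n$. Since a free abelian group admits no division by $n$, this composite is not the identity, so I do not expect the subgroup $k^\times \subseteq K^\times$ itself to be retracted off; instead I would retract onto the image $N(K^\times)$ and show it is abstractly isomorphic to $k^\times$.

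Next I would pin down $N(K^\times)$ by a rank computation, which is exactly where the remark preceding the proposition is used. On the one hand $N(K^\times) \subseteq k^\times$, so $\rank N(K^\times) \le \rank k^\times$. On the other hand $N(K^\times)$ contains $N(k^\times) = (k^\times)^{n}$, the image of multiplication by $n$; because $k^\times$ is torsion-free this map is injective and becomes an isomorphism after $\otimes \Q$, whence $\rank (k^\times)^{n} = \rank k^\times$. The sandwich forces $\rank N(K^\times) = \rank k^\times$. Now $N(K^\times)$ is a subgroup of the free abelian group $k^\times$, hence is itself free abelian (by the standard theorem that subgroups of free abelian groups are free; see \cite{Robinson}), and two free abelian groups of equal rank are isomorphic, so $N(K^\times) \cong k^\times$.

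Finally I would split the surjection $N\colon K^\times \to N(K^\times)$. Its target is free, hence projective, so the short exact sequence $0 \to \ker N \to K^\times \to N(K^\times) \to 0$ splits, yielding $K^\times \cong \ker N \oplus N(K^\times)$ with the summand $N(K^\times) \cong k^\times$, as required. The genuinely substantive point---and the step I expect to require the most care---is the rank equality $\rank N(K^\times) = \rank k^\times$, since everything downstream (freeness of the image, its isomorphism type, and the projectivity that produces the splitting) hinges on it; the identity $N(a)=a^{n}$ and the infinite-rank version of ``subgroups of free abelian groups are free'' are the two ingredients I would be most careful to state precisely.
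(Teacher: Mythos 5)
Your proposal is correct and follows essentially the same route as the paper's proof: the norm map $N_{K/k}$, the observation that its image is sandwiched between $(k^\times)^{[K:k]} \cong k^\times$ and $k^\times$ (forcing equality of ranks), freeness of the image as a subgroup of the free group $k^\times$, and the splitting of the surjection onto a free (hence projective) $\Z$-module. The only cosmetic difference is that you compute the rank equality by tensoring with $\Q$, while the paper deduces it directly from the isomorphism $(k^\times)^d \cong k^\times$; these are the same argument.
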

\begin{proof}
The field norm 
\[ N = N_{K/k} \colon K^\times \longrightarrow k^\times \]
is a non-trivial homomorphism of $\mathbb{Z}$-modules (see \cite[8.1.3]{Roman}). The restriction of $N$ to $k^\times$ is the $d$th-power map, where $d = [K\colon k]$.  The image of $N$ therefore contains $(k^\times)^d$, which is isomorphic to $k^\times$ since the latter group is a free $\Z$-module. We now have $$k^\times \cong (k^\times)^d \subseteq \im N \subseteq k^\times,$$ hence $\rank \im N = \rank k^\times$.  Since $\im N$ is a submodule of a free module, and submodules of free modules over a principal ideal domain are always free, we conclude that the image of the norm map is a free $\mathbb{Z}$-module whose rank is the same as the rank of $k^\times$; hence, $\im N \cong k^\times$. Further, since free $\mathbb{Z}$-modules are projective, the surjection
\[ N \colon K^\times \longrightarrow  \im N \]
splits. Thus $K^\times$ has a summand isomorphic to $k^\times$, as desired.
\end{proof}

\begin{thm}\label{reduction}
Let $F$ be an infinite field with indecomposable multiplicative group. Then, $F$ is an extension of $\mathbf{F}_2$, and there is an intermediate field $F \supseteq L \supsetneq \mathbf{F}_2$ such that $F$ is algebraic over $L$ and $L$ is a purely transcendental extension of $\mathbf{F}_2$.  The fields $F$ and $L$ must satisfy the following properties.
\begin{enumerate}
\item The group $L^\times$ is free abelian of infinite rank.\label{Lx}
\item $[F\colon L] = \infty$.\label{degree}
\item The field $F$ is a completely transcendental extension of $\mathbf{F}_2$. In particular, $F$ is not algebraically closed.\label{ct}
\item The group $F^\times$ is a torsion-free indecomposable abelian group of infinite rank. In particular, $F^\times$ is reduced and $\Hom(F^\times, \Z) = 0.$\label{fxprops}
\item The group $F^\times$ is an essential infinite union of subgroups, each having a free abelian group isomorphic to $L^\times$ as a summand.\label{essential}
\end{enumerate}
\end{thm}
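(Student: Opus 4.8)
The plan is to peel off the underlying field-theoretic skeleton first and then verify the five numbered properties in turn. Since $F$ is infinite, $F^\times$ is an infinite indecomposable abelian group, and by Theorem~\ref{tfthm} (via Theorem~\ref{corqc} and the impossibility of a quasicyclic multiplicative group) it must be torsion-free. As $-1$ is a nontrivial torsion element in any field of characteristic $\neq 2$, the field $F$ has characteristic $2$ and contains $\mathbf{F}_2$. Furthermore $F$ cannot be algebraic over $\mathbf{F}_2$, since an algebraic extension of $\mathbf{F}_2$ embeds in $\overline{\mathbf{F}_2}$, all of whose elements are roots of unity, which would make $F^\times$ torsion. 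Hence $F/\mathbf{F}_2$ has transcendence degree at least $1$; fixing a transcendence basis $B \neq \emptyset$ and putting $L = \mathbf{F}_2(B)$ yields a purely transcendental intermediate field $L \supsetneq \mathbf{F}_2$ with $F/L$ algebraic, as required.

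For property~\ref{Lx} I would invoke unique factorization in the polynomial ring $\mathbf{F}_2[B]$, whose only unit is $1$ because $\mathbf{F}_2^\times$ is trivial; this identifies $L^\times$ with the free abelian group on the (infinite) set of irreducible polynomials, so $L^\times$ is free abelian of infinite rank. Property~\ref{degree} then follows by contradiction: were $[F:L]$ finite, Proposition~\ref{free-summand} would give $F^\times \cong L^\times \oplus C$, and indecomposability would force $C=0$, so $F^\times \cong L^\times$; but a free abelian group of infinite rank splits off a copy of $\Z$ and is therefore decomposable, a contradiction.

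To establish property~\ref{ct} I would show that $\mathbf{F}_2$ is relatively algebraically closed in $F$ --- which is what ``completely transcendental'' asserts --- by noting that any element of $F$ algebraic over $\mathbf{F}_2$ lies in some $\mathbf{F}_{2^k}$, and $k \geq 2$ would place the nontrivial torsion group $\mathbf{F}_{2^k}^\times$ inside the torsion-free $F^\times$; the same torsion obstruction (e.g.\ a primitive cube root of unity) shows $F$ is not algebraically closed. Property~\ref{fxprops} then assembles the structural consequences: torsion-freeness and indecomposability are already known, the rank is infinite because $L^\times \hookrightarrow F^\times$ and rank is monotone under inclusion, the group is reduced because its maximal divisible subgroup is a direct summand (hence $0$ by indecomposability, since the alternative would make $F^\times$ an infinite-rank $\Q$-vector space), and $\Hom(F^\times,\Z)=0$ because a nonzero map would split off a free rank-one summand, forcing $F^\times \cong \Z$ against infinite rank.

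Finally, for property~\ref{essential} I would write $F$ as the directed union of its finite subextensions $L \subseteq K \subseteq F$; since $F/L$ is algebraic these exhaust $F$, giving $F^\times = \bigcup_K K^\times$, an infinite non-stabilizing union by property~\ref{degree}, and Proposition~\ref{free-summand} applied to each finite $K/L$ shows every $K^\times$ carries a summand isomorphic to $L^\times$. The step I expect to be the main obstacle is interpreting ``essential'' correctly: the tempting reading that $L^\times$ is an essential subgroup of $F^\times$ is actually false, since torsion-freeness lets one produce an element $x$ algebraic over $L$ (for instance a root of $X^2+X+t$) with $\langle x\rangle \cap L^\times = \{1\}$, so the essentiality must describe the ascending directed union exhausting $F^\times$ rather than a single large subgroup. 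The other delicate point is property~\ref{Lx}: the conclusion that $L^\times$ is \emph{free} rests squarely on $\mathbf{F}_2^\times$ being trivial and would fail over any larger coefficient field.
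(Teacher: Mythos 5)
Your proposal is correct and follows essentially the same route as the paper: torsion-freeness via Theorem~\ref{tfthm}, the unique-factorization argument identifying $L^\times$ as free abelian of infinite rank, Proposition~\ref{free-summand} to rule out $[F:L]<\infty$ and to produce the $L^\times$-summands in property~(\ref{essential}), and the injectivity/projectivity arguments for reducedness and $\Hom(F^\times,\Z)=0$. The only differences are cosmetic --- you establish complete transcendence over $\mathbf{F}_2$ before constructing $L$ (the paper defers it and cites it forward), and you phrase the reducedness contradiction via an infinite-rank $\Q$-vector space rather than via the classification of indecomposable divisible groups --- and your reading of ``essential'' as a property of the covering family (per the paper's stated definition) rather than of a single subgroup is the intended one.
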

Recall that an abelian group is reduced if it has no divisible subgroups. A completely transcendental extension $A/B$ is one where every element of $A\setminus B$ is transcendental over $B$. We say that a set $T$ is an essential union of a collection of subsets if each subset in the collection is necessary in covering the set $T$.
\begin{proof}
Since $F$ is infinite and $F^\times$ is indecomposable, we immediately obtain that $F^\times$ is torsion-free and $F$ is an extension of $\mathbf{F}_2$ by Theorem \ref{tfthm}. The existence of the intermediate field $L$ is a standard result in field theory. It is a proper extension of $\mathbf{F}_2$, for otherwise $F$ would contain elements algebraic over $\mathbf{F}_2$, contradicting (\ref{ct}), proved below.

Now consider (\ref{Lx}). Let $S$ be an algebraically independent set over $\mathbf{F}_2$ such that $L = \mathbf{F}_2(S)$. The ring $\mathbf{F}_{2}[S]$ of polynomials with indeterminates in $S$ and coefficients in $\mathbf{F}_2$ is a unique factorization domain, and $\mathbf{F}_2(S)$ is its field of fractions. We therefore have \[ L^\times \cong \bigoplus_{f \in \Delta} \mathbb{Z},\] where $\Delta$ is the set of irreducible polynomials in $\mathbf{F}_{2}[S]$. This is a free abelian group of infinite rank.

We next compute $[F\colon L]$. Assume to the contrary that $[F \colon L] < \infty $. We may then apply Proposition \ref{free-summand} to obtain that the free abelian group $L^\times$ of infinite rank is a summand of $F^\times$. This is impossible, however, as $F^\times$ is indecomposable. So $[F\colon L] = \infty$.

Now consider statement (\ref{ct}). No element in $F  \setminus \mathbf{F}_2$ can be algebraic over $\mathbf{F}_2$, for if $a$ in $F$ is algebraic over $\mathbf{F}_2$, then the subfield $\mathbf{F}_2(a)$, being a finite field different from $\mathbf{F}_2$, will contain non-trivial torsion elements, contradicting the fact that $F^\times$ is torsion-free. Thus $F$ is a completely transcendental extension of $\mathbf{F}_2$.  Since $F$ contains no roots of unity, it is certainly not algebraically closed.

For (\ref{fxprops}), we have already observed that $F^\times$ is torsion-free, and it is assumed indecomposable. It has infinite rank since it contains the subgroup $L^\times$ which has infinite rank.  To see that it is reduced, we first summon several facts from the homological algebra of abelian groups. See \cite[\S4]{Robinson} for details. The category of abelian groups is identical to the category of modules over the ring $\Z$. In this category, an abelian group is divisible if and only if it is an injective $\Z$-module. The salient property of injective modules here is that an injective module is a summand of any module in which it embeds. Now, suppose the indecomposable group $F^\times$ has a divisible subgroup. This subgroup must be a summand, and therefore $F^\times$ is itself divisible.  The only indecomposable divisible abelian groups are the quasicyclic groups $C_{p^\infty}$ and the rational numbers $\Q$. Since $F^\times$ is torsion-free, we must have $F^\times \cong \Q$. However, $\Q$ has rank 1, so this is not possible. Finally, if $\Hom(F^\times, \Z) \neq 0$, then $F^\times$ admits a nontrivial homomorphism onto an infinite cyclic group. Such groups are projective as $\Z$-modules, and any surjective map to a projective module splits. We therefore obtain that $\Z$ is a summand of $F^\times$, so $F^\times \cong \Z$. This is impossible, again because $F^\times$ has infinite rank.

Finally, we turn our attention to (\ref{essential}). Since $[F\colon L] = \infty$, we can express $F$ as an essential infinite union of finite extensions over $L$. The group $F^\times$ is an essential infinite union of the multiplicative groups of these finite extensions. Now apply Proposition \ref{free-summand} to each of these finite extensions to obtain that $L^\times$ is a summand of each of these subgroups.\end{proof}

\section{Finite Commutative Rings\label{fr}}
The problem under investigation can be easily generalized to rings.  Let $R$ be a commutative ring and let $R^\times$ denote the multiplicative group of units in $R$.  For which rings $R$ is $R^\times$ indecomposable? We provide an answer to this question for finite commutative rings in Theorem \ref{finiterings}.

Let $R$ be a finite commutative ring. The ring $R$ obviously satisfies the descending chain conditions on its ideals. That is, $R$ is an Artinian ring. It therefore decomposes as a direct product of finite commutative Artinian local rings  (see \cite[\S 8]{am}):
\begin{equation} R = R_1 \times R_2 \times \cdots \times R_k. \label{adecomp}\end{equation}
Since each $R_i$ is  Artinian and local, it has has a unique prime ideal (such rings are called primary rings). Thus, $k$ is the number of prime ideals in $R$.  Taking units, we obtain
\[ R^\times = R_1^\times \times R_2^\times \times \cdots \times R_k^\times.\]
The group $R^\times$ is indecomposable exactly when one factor is indecomposable and the remaining factors are trivial.  Since a non-trivial indecomposable finite abelian group is a cyclic group of prime power order, we have the following proposition.

\begin{prop}
Let $R$ be a finite commutative ring. The ring $R$ has an indecomposable multiplicative group of units if and only if the multiplicative group of exactly one factor in the decomposition (\ref{adecomp}) has prime power order and all the remaining factors have trivial multiplicative groups.
\end{prop}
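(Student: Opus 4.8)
The plan is to transport the entire question into group theory by way of the product decomposition already in hand, namely $R^\times = R_1^\times \times \cdots \times R_k^\times$, and then apply two facts: the structure-theorem characterization of indecomposable finite abelian groups recorded in the proof of the first proposition, and an elementary lemma about when a finite direct product of abelian groups is indecomposable. Once the group-theoretic statement is isolated, the ring-theoretic conclusion is immediate, since $R^\times$ is literally this product and the factors $R_i^\times$ are exactly the unit groups of the local factors $R_i$.

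First I would establish the key lemma: a finite direct product $G = G_1 \times \cdots \times G_k$ of abelian groups is indecomposable if and only if all but one of the factors are trivial and the surviving factor is itself indecomposable. The forward direction is the heart of the matter. Suppose two indices $i \neq j$ have $G_i$ and $G_j$ both non-trivial; then writing $G = G_i \times \left(\prod_{l \neq i} G_l\right)$ exhibits $G$ as a direct sum of two non-trivial subgroups, the complementary factor being non-trivial because it contains $G_j$. This contradicts indecomposability, so at most one factor is non-trivial; and a product with a unique non-trivial factor is indecomposable exactly when that factor is. The reverse direction is immediate, as $G$ is then isomorphic to its single non-trivial factor.

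Finally I would combine this lemma with the fact quoted in the proof of the first proposition, that a non-trivial finite abelian group is indecomposable precisely when it is cyclic of prime power order. Applying the lemma to $G = R^\times = \prod_i R_i^\times$ shows that $R^\times$ is indecomposable exactly when a single factor $R_i^\times$ is indecomposable and all the others are trivial; translating ``indecomposable'' for that one finite factor into ``cyclic of prime power order''---in particular, of prime power order---yields the stated condition and completes the bookkeeping.

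The step I expect to require the most care is matching the two formulations of the condition on the surviving factor: the lemma delivers that the unique non-trivial $R_i^\times$ is indecomposable, and one must invoke the structure theorem to see that this is the same as that factor being a cyclic group of prime power order, so that ``prime power order'' correctly captures it. Everything else---the reduction through the product decomposition and the tracking of which factors are trivial---is routine.
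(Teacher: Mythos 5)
Your proposal is correct and follows essentially the same route as the paper: the paper likewise passes to the decomposition $R^\times = R_1^\times \times R_2^\times \times \cdots \times R_k^\times$, asserts (without proof) that such a product is indecomposable exactly when one factor is indecomposable and the remaining factors are trivial, and then uses the structure theorem to identify a non-trivial indecomposable finite abelian group as cyclic of prime power order. Your only addition is an explicit proof of the product lemma, which the paper treats as obvious.
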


This proposition reduces our problem to finding all  finite commutative Artinian local (primary) rings whose multiplicative group is either trivial or a cyclic group of prime power order. The more general problem of finding finite commutative primary rings with cyclic multiplicative groups was solved by Pearson and Schneider in \cite{PS}.

\begin{thm}[{\cite{PS}}]
Let $R$ be a finite commutative primary ring. The ring $R$ has a cyclic group of units if and only if $R$ is isomorphic to one of the following rings:
\begin{enumerate}
\item $\mathbf{F}_{q^{t}}$, where $q$ is prime and $t \ge 1$,
\item $\mathbb{Z}_{q^{s}}$, where $q$ is an odd prime and $s \ge 1$,
\item $\mathbb{Z}_{4}$,
\item $\mathbf{F}_{q}[x]/(x^{2})$, where $q$ is prime,
\item $\mathbf{F}_{2}[x]/(x^{3})$, or
\item $\mathbb{Z}_{4}[x]/(2x, x^{2}-2)$.
\end{enumerate}
\end{thm}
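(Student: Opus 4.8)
The plan is to prove this classification of Pearson and Schneider directly by reducing it to a question about a single $q$-group. Since $R$ is a finite commutative primary ring, it is local with maximal ideal $\mathfrak{m}$ and residue field $R/\mathfrak{m}\cong\mathbf{F}_{q^t}$, where $q$ is the residue characteristic. The natural surjection $R^\times\to(R/\mathfrak{m})^\times$ has kernel $1+\mathfrak{m}$, whose order $|\mathfrak{m}|$ is a power of $q$; since $|(R/\mathfrak{m})^\times|=q^t-1$ is prime to $q$, the subgroup $1+\mathfrak{m}$ is the normal Sylow $q$-subgroup of the abelian group $R^\times$, so that $R^\times\cong(1+\mathfrak{m})\times C_{q^t-1}$. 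As the two factors have coprime orders and $C_{q^t-1}$ is cyclic, $R^\times$ is cyclic if and only if $1+\mathfrak{m}$ is. This immediately disposes of the field case $\mathfrak{m}=0$, giving family (1).

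Assuming $R$ is not a field, I would first cut down the residue field and the shape of $\mathfrak{m}$. The quotient $(1+\mathfrak{m})/(1+\mathfrak{m}^2)\cong\mathfrak{m}/\mathfrak{m}^2$ is a quotient of the hypothetically cyclic group $1+\mathfrak{m}$, hence cyclic; but it is also an $\mathbf{F}_{q^t}$-vector space, i.e.\ an elementary abelian $q$-group of order $q^{td}$ with $d=\dim_{\mathbf{F}_{q^t}}\mathfrak{m}/\mathfrak{m}^2$. Cyclicity forces $t=1$ and $d=1$, so the residue field is the prime field $\mathbf{F}_q$ and $\mathfrak{m}=(\pi)$ is principal; thus $R$ is a finite chain ring, its ideals are exactly the powers $\mathfrak{m}^i=(\pi^i)$, and writing $e$ for the index of nilpotency we have $|R|=q^e$ and $|1+\mathfrak{m}|=q^{e-1}$. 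It then remains to decide, among such chain rings, which have $1+\mathfrak{m}$ cyclic, and I would split on $\operatorname{char}R$.

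When $\operatorname{char}R=q$, the ring contains $\mathbf{F}_q$ and is generated over it by $\pi$, so $R\cong\mathbf{F}_q[x]/(x^e)$. Here the freshman's dream gives $(1+y)^{q^j}=1+y^{q^j}$ for $y\in\mathfrak{m}$, so the largest order of an element of $1+\mathfrak{m}$ is $q^{\lceil\log_q e\rceil}$, attained at $1+\pi$. Cyclicity is equivalent to this equalling the group order $q^{e-1}$, i.e.\ to $\lceil\log_q e\rceil=e-1$; an elementary check shows the only solutions are $e=2$ (any $q$) and $e=3$ with $q=2$, yielding $\mathbf{F}_q[x]/(x^2)$ and $\mathbf{F}_2[x]/(x^3)$, namely families (4) and (5).

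The remaining, and hardest, case is $\operatorname{char}R=q^k$ with $k\ge2$, where the prime ring $\Z/q^k$ sits inside $R$ and $q$ lies in some power $\mathfrak{m}^\ell$. I expect this to be the main obstacle, precisely because the freshman's dream fails: the term $qy$ in the binomial expansion of $(1+y)^q$ now competes with $y^q$, and controlling the $q$-power map requires tracking the valuation $\ell$ of $q$. The plan is to compute the order of $1+\pi$ and the number of elements of order dividing $q$, forcing cyclicity to pin down both $\ell$ and $e$. When $q$ is a uniformizer ($\ell=1$) one finds $R\cong\Z/q^e$, and the classical structure of $(\Z/q^e)^\times$ — cyclic exactly when $q$ is odd, or $q=2$ and $e\le2$ — produces family (2) and the ring $\Z_4$ of case (3); the splitting $(\Z/2^e)^\times\cong C_2\times C_{2^{e-2}}$ for $e\ge3$ is what excludes $q=2$ from family (2). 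The case $\ell\ge2$ is the most delicate: one must show it collapses to the single small ring $\Z_4[x]/(2x,x^2-2)$ of case (6), where the relation $x^2=2$ is exactly what keeps $1+\mathfrak{m}\cong C_4$ cyclic rather than elementary abelian. Checking that no other chain ring survives, and that each listed ring genuinely has cyclic units, would complete the classification.
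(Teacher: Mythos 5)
The paper does not prove this theorem at all---it is quoted from Pearson and Schneider \cite{PS} and used as a black box---so your proposal must be judged as a free-standing proof. Most of it holds up. The splitting $R^\times\cong(1+\mathfrak{m})\times C_{q^t-1}$ via coprimality of orders, the observation that $(1+\mathfrak{m})/(1+\mathfrak{m}^2)\cong\mathfrak{m}/\mathfrak{m}^2$ is an $\mathbf{F}_{q^t}$-vector space whose cyclicity forces $t=d=1$, the resulting chain-ring structure with $|R|=q^e$, and the complete analysis of the equal-characteristic case via the freshman's dream (yielding exactly families (1), (4), (5)) are all correct; so is the identification $R\cong\Z/q^e$ when $q$ is a uniformizer, which yields (2) and (3) from the classical structure of $(\Z/q^e)^\times$.

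The genuine gap is the case you yourself flag as the main obstacle: $\operatorname{char} R=q^k$ with $k\ge 2$ and $q\in\mathfrak{m}^2$. There you state what ``one must show''---that only $\Z_4[x]/(2x,x^2-2)$ survives---but give no argument; as written this is a statement of intent, not a proof, and it is precisely the case that produces ring (6) and must also rule out odd $q$, so the classification is not established. (The converse check that each listed ring has cyclic units is likewise deferred, though routine.) The idea you name does work, and can be executed as follows. Let $v$ be the valuation on the chain ring, $\ell=v(q)\ge 2$, and set $j=\max(e-\ell,\lceil e/q\rceil)$. For $y\in\mathfrak{m}^j$, every term of $(1+y)^q-1=qy+\binom{q}{2}y^2+\cdots+y^q$ has valuation at least $e$ (the middle binomial coefficients are divisible by the prime $q$), so the subgroup $1+\mathfrak{m}^j$, of order $q^{e-j}$, lies in the kernel of the $q$-th power map. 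If $1+\mathfrak{m}$ is cyclic of order $q^{e-1}$, that kernel has order exactly $q$, forcing $j\ge e-1$. Since $\ell\ge 2$ gives $e-\ell\le e-2$, we need $\lceil e/q\rceil\ge e-1$ with $e\ge\ell+1\ge 3$; this fails for every $q\ge 3$ and forces $q=2$, $e=3$, $\ell=2$. Then $2\pi\in\mathfrak{m}^3=0$, and writing $2=w\pi^2$ with $w$ a unit gives $\pi^2=2w^{-1}=2$ because $w^{-1}\in 1+\mathfrak{m}$ and $2\mathfrak{m}=0$; counting orders, $R\cong\Z_4[x]/(2x,x^2-2)$. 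Adding this argument (and the easy converse verifications) turns your outline into a complete proof.
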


Computing the group of units for each of these rings is straightforward.  We summarize the results below.

\begin{enumerate}
\item $(\mathbf{F}_{q^{t}})^\times \cong C_{q^{t}-1}$. 
\item $(\mathbb{Z}_{q^{s}})^\times \cong C_{q^{s-1}(q-1)}$ (since $q$ is odd).  
\item $(\mathbb{Z}_{4})^\times \cong C_2$. 
\item $(\mathbf{F}_{q}[x]/(x^{2}) )^\times\cong C_{(q-1)q}$. 
\item $(\mathbf{F}_{2}[x]/(x^{3}))^\times \cong C_4$. 
\item $(\mathbb{Z}_{4}[x]/(2x, x^{2}-2))^\times \cong C_4$.
\end{enumerate}

We must now go through this list and isolate the rings whose multiplicative groups are either trivial or of prime power order. This, in conjunction with Theorem \ref{finitefields}, results in following theorem. (Note that the only finite primary ring with trivial multiplicative group of units is the ring $\mathbb{Z}_2$.) A ring is said to be indecomposable if cannot be expressed as a direct product of two non-zero rings. 

\begin{thm} \label{finiterings}
The following is a complete list of the finite commutative indecomposable rings which have an indecomposable multiplicative group:
\begin{enumerate}
\item $\mathbf{F}_2$,\label{first}
\item  $\mathbf{F}_9$,
\item $\mathbf{F}_p$, where $p$ is a Fermat prime,
\item $\mathbf{F}_{q+1}$, where $q$ is a Mersenne prime,
\item $\mathbb{Z}_{4}$,
\item $\mathbf{F}_{2}[x]/(x^{2})$,
\item $\mathbf{F}_{2}[x]/(x^{3})$, and
\item $\mathbb{Z}_{4}[x]/(2x, x^{2}-2)$.\label{last}
\end{enumerate}
A finite commutative ring $R$ has an indecomposable group of units if and only if $R$ is a (possibly empty) product of finitely many copies of $\Z_2$ and exactly one ring on the above list.
\end{thm}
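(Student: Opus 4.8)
The plan is to layer the Pearson--Schneider classification and Theorem \ref{finitefields} on top of the preceding proposition, with the real work being the bookkeeping that reconciles the two descriptions of the eight-item list. I would begin with the decomposition (\ref{adecomp}), $R = R_1 \times \cdots \times R_k$ into primary factors, and reduce everything to the behavior of the $R_i^\times$. The proposition states that $R^\times$ is (nontrivially) indecomposable precisely when one factor $R_i^\times$ has prime power order and every other factor has trivial group of units; to this I would append the elementary observation that the trivial group is itself indecomposable. Combining these gives the single structural statement that drives the rest of the argument: $R^\times$ is indecomposable if and only if \emph{at most} one factor $R_i^\times$ has prime power order and all remaining factors have trivial group of units.

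Next I would identify the two admissible kinds of primary factor. The factors with trivial group of units are, by the parenthetical fact recorded above, exactly copies of $\Z_2$. A factor with unit group of prime power order is a finite commutative primary ring whose unit group is cyclic of prime power order, hence occurs in the Pearson--Schneider list, and I would scan the six computed unit groups keeping only those of prime power order. For the field case $\mathbf{F}_{q^t}$ this requires $q^t-1$ to be a prime power, which by Theorem \ref{finitefields} singles out $\mathbf{F}_9$, the Fermat-prime fields $\mathbf{F}_p$, and the Mersenne-prime fields $\mathbf{F}_{q+1}$, the remaining field $\mathbf{F}_2$ being the trivial-unit case. The rings $\Z_4$, $\mathbf{F}_2[x]/(x^2)$, $\mathbf{F}_2[x]/(x^3)$, and $\Z_4[x]/(2x,x^2-2)$ have unit groups $C_2, C_2, C_4, C_4$, all of prime power order, so they survive, whereas $\Z_{q^s}$ with $s \ge 2$ and $\mathbf{F}_q[x]/(x^2)$ with $q$ odd are discarded because the orders $q^{s-1}(q-1)$ and $(q-1)q$ are then divisible by the two distinct primes $q$ and $2$. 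This recovers exactly items (2)--(8), and together with $\Z_2 = \mathbf{F}_2$ (item (1), the unique primary ring with trivial units) gives the full list; since the indecomposable finite commutative rings are precisely the primary ones, this simultaneously establishes the first assertion of the theorem.

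Finally I would assemble the equivalence. For the reverse direction, if $R$ is a product of copies of $\Z_2$ together with exactly one ring $S$ from the list, then $R^\times \cong S^\times$ is either trivial (when $S = \Z_2$) or cyclic of prime power order, hence indecomposable. For the forward direction, if $R^\times$ is indecomposable then by the structural statement either every factor is a copy of $\Z_2$, in which case $R$ is a nonempty product of copies of $\Z_2$ and we designate one copy as the chosen list ring, or exactly one factor lies among items (2)--(8) and the rest are copies of $\Z_2$; in both cases $R$ has the stated form. The one subtle point, and the only genuine obstacle, is exactly this treatment of the all-trivial case: because the trivial group is indecomposable, arbitrary products of copies of $\Z_2$ must be admitted, and the phrase ``exactly one ring on the above list'' accommodates them only because $\Z_2$ itself appears on the list. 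Keeping this bookkeeping straight, rather than any substantive computation, is where the proof must be written with care.
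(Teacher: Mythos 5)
Your overall route is the same as the paper's: decompose $R$ into primary factors, combine the proposition of \S\ref{fr} with the Pearson--Schneider classification, filter the six computed unit groups for prime power order using Theorem \ref{finitefields}, and handle the all-trivial case by letting $\mathbf{F}_2$ play the role of the distinguished list ring. Your explicit bookkeeping for the trivial case and the final assembly of the biconditional are correct and actually more careful than what the paper writes out. Your one structural departure is in verifying that the list rings are indecomposable \emph{as rings}: the paper checks case by case that the four non-field rings have no idempotents $e \neq 0, 1$, whereas you invoke the general fact that the indecomposable finite commutative rings are exactly the primary (local) ones; that is a clean substitute, though it silently rests on the standard fact that a local ring has only trivial idempotents.

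However, one step of your argument is false as written, and it is load-bearing. You claim that ``a factor with unit group of prime power order is a finite commutative primary ring whose unit group is cyclic of prime power order, hence occurs in the Pearson--Schneider list.'' Prime power order does not imply cyclic: the ring $\mathbf{F}_2[x,y]/(x^2, xy, y^2)$ is a finite commutative primary ring whose unit group is $C_2 \times C_2$ --- of order $4$, not cyclic, and accordingly absent from the Pearson--Schneider list. (The same example shows that the ``if'' direction of your structural statement, and indeed of the paper's proposition read literally, is false; but you never use that direction, since you argue the reverse implication of the theorem directly from the computed unit groups.) The trouble is that your structural statement records only the \emph{order} of the exceptional factor's unit group and discards its indecomposability, and you then try to recover cyclicity from the order alone. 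The repair is immediate and is exactly how the paper derives its proposition: in the forward direction the exceptional factor's unit group is isomorphic to $R^\times$ itself, hence is a nontrivial indecomposable finite abelian group, and the structure theorem for finite abelian groups then gives that it is cyclic of prime power order. It is indecomposability, not the order, that places the factor in the Pearson--Schneider list; with that substitution the rest of your argument goes through verbatim.
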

\begin{proof}
It remains only to verify that each ring on the above list is indeed indecomposable; this is obvious for the first four rings, which are fields.  For the remaining rings, observe that a ring $R$ admits a non-trivial decomposition $R \cong R_1 \times R_2$ if and only if $R$ contains an idempotent element $e \neq 0, 1$. It is straightforward to check that the last four rings contain no such idempotent elements.\end{proof}

\bibliographystyle{alpha}

 

\end{document}